\def\hang{\hangindent\parindent}
 \def\rf{\par\noindent\hang}
\newtheorem{theorem}{Theorem}
\newcommand{\bX}{\boldsymbol{X}}
\newcommand{\btheta}{\boldsymbol{\theta}}
\newcommand{\ntheta}{||\btheta||}
\newcommand{\aplsT}{a^{+}(T)}
\begin{document}

\baselineskip=18pt


\begin{center} \large{{\bf A new recentered confidence sphere for the multivariate normal mean}}
\end{center}

\bigskip



\begin{center}
\large \sc {Waruni Abeysekera and Paul Kabaila$^{*}$}
\end{center}

\begin{center}
{\it La Trobe University}
\end{center}

\vspace{12cm}


\noindent $^*$ Author to whom correspondence should be addressed.

\noindent Department of Mathematics and Statistics, La Trobe University, Victoria 3086, Australia.

\noindent e-mail: P.Kabaila@latrobe.edu.au

\noindent Facsimile: 3 9479 2466

\noindent Telephone: 3 9479 2594

\newpage

\begin{center}
\large{{\bf Abstract}}
\end{center}

\noindent  We describe a new recentered confidence sphere for the mean, $\btheta$,
of a multivariate normal distribution. This sphere is centred on the positive-part
James-Stein estimator, with radius that is a piecewise cubic Hermite interpolating
polynomial function of the norm of the data vector. This radius function is determined
by numerically minimizing the scaled expected volume, at $\btheta = \boldsymbol{0}$,
of this confidence sphere, subject to the coverage constraint. We use the computationally-convenient
formula, derived by Casella and Hwang, 1983, {\sl JASA}, for the coverage probability
of a recentered confidence sphere. Casella and Hwang, \textit{op. cit.}, describe a recentered confidence
sphere that is also centred on the positive-part
James-Stein estimator, but with radius function determined by empirical Bayes
considerations. Our new recentered confidence sphere compares favourably with this
confidence sphere, in terms of both the minimum coverage probability and the scaled expected
volume at $\btheta = \boldsymbol{0}$.

\bigskip

\noindent \textbf{Keywords and phrases:} confidence set, multivariate normal
mean, recentered confidence sphere.

\newpage

\noindent {\large{\bf 1. Introduction}}

\medskip

Suppose that $\bX = \left( X_1,...,X_p \right) \sim N\left( \btheta, \sigma^2 \boldsymbol{\textbf{I}_{p}} \right)$
where
$\btheta = \left( \theta_1,...,\theta_p \right)$ and
$\boldsymbol{\textbf{I}_{p}}$ denotes the $p \times p$ identity matrix.
We consider the problem of finding a confidence set for $\btheta$, based on $\bX$, with prescribed
confidence coefficient $1-\alpha$ that satisfies the following two conditions. The first
condition is that this confidence set has volume that never exceeds the volume of the
standard $1-\alpha$ confidence sphere centred at $\bX$.
The second condition is that this confidence set has an expected volume that
is small by comparison with that
of the standard $1-\alpha$ confidence sphere, when $\btheta = \boldsymbol{0}$.
Casella and Hwang (1987) argue cogently that the confidence set for $\btheta$ should be
tailored to the uncertain prior information available about $\btheta$.
We consider confidence sets that are tailored to the uncertain prior information
that $\btheta = \boldsymbol{0}$. The second condition is motivated by a desire to
utilize this uncertain prior information.
This is a very difficult problem to solve and, in common with much of the existing literature on it,
we assume that $\sigma^2$ is known. Without loss of generality, we assume that $\sigma^2 = 1$.
The standard $1-\alpha$ confidence set for $\btheta$ is
$I = \{\btheta : \, \| \btheta-\bX \| \leq d \}$,
where the positive number $d$ satisfies
$P\big( Q \leq d^2 \big) = 1-\alpha$
for $Q \sim \chi_p^2$.

The early work of Stein (1962), Brown (1966), Joshi (1967) and Hwang and Casella (1982) that relates
to this problem is reviewed by Casella and Hwang (2012). Confidence sets for $\btheta$, of various shapes,
that solve this problem have been proposed by Faith (1976), Berger (1980),
Casella and Hwang (1983, 1987), Shinozaki (1989), Tseng and Brown (1997), Samworth (2005)
and Efron (2006).
These confidence sets are reviewed and compared by Efron (2006) and Casella and Hwang (2012).

To provide specific proposals for recentered confidence spheres, Casella and Hwang (1983, 1987) and
Samworth (2005) center their confidence spheres at the positive-part James-Stein estimator.
The radii of these confidence spheres are functions of $|| \bX ||$ that are determined by
empirical Bayes considerations (Casella and Hwang, 1983), Taylor series or the bootstrap
(Samworth, 2005).
These papers compare confidence sets for $\btheta$
using the coverage probability and the scaled volume (or its $p$'th root).

In the present paper, we also consider a confidence sphere for $\btheta$ that is centered
at the positive-part James-Stein estimator.
However, we have chosen the radius of this confidence sphere to be a
piecewise cubic Hermite interpolating
polynomial function of $|| \bX ||$. This radius function is required to be both nondecreasing and
bounded above by $d$. This function is determined by numerically minimizing the
scaled expected volume of this confidence sphere at $\btheta = \boldsymbol{0}$, subject to the constraint that the
coverage probability never falls below $1-\alpha$. Here, the scaled expected volume is defined
to be the ratio (expected volume of the recentered confidence sphere) / (volume of $I$).
This numerical constrained minimization is made feasible by the fact that both the coverage
probability and the scaled expected volume of a recentered confidence sphere are
functions of $\gamma = || \btheta ||$, for a given radius function.
We use the computationally-convenient
formula, derived by Casella and Hwang (1983), for the coverage probability
of a recentered confidence sphere for $p$ odd.

The two main contributions of the present paper are the following:
\begin{enumerate}

\item[(1)]

We shift the focus from scaled volume (or its $p$'th root) to scaled {\sl expected} volume.
A goal of seeking to minimize
(in some sense) the volume of the recentered confidence sphere for the most probable values of
$\bX$ when $\btheta = \boldsymbol{0}$, subject to the coverage constraint, is problematic (Casella and Hwang, 1986).
By contrast, as we show in the present paper, minimization of the scaled expected volume of
the recentered confidence sphere (RCS) at $\btheta = \boldsymbol{0}$, subject to the coverage constraint,
leads to a RCS with excellent performance.

\item[(2)]

Our new RCS compares favourably with the RCS described in Section 4 of
Casella and Hwang (1983), in terms of both the minimum coverage probability
and the scaled expected volume at $\btheta = \boldsymbol{0}$.
This is shown in Figure 1, 2 and 3, which are for $1-\alpha=0.95$ and $p = 3, 5$ and $15$,
respectively.
The bottom panel and middle panel in each of these figures
compare the coverage probability and scaled expected volume, respectively,
of the new RCS and the RCS of Casella and Hwang (1983, Section 4).

\end{enumerate}

\baselineskip=17pt

\bigskip

\noindent {\large{\bf 2. Performance of the new RCS
by comparison with the RCS of Casella and Hwang (1983, Section 4)}}

\medskip

Both the RCS of Casella and Hwang (1983, Section 4) and the new RCS are centered on the positive-part
James-Stein estimator. These RCS's take the form
$J(b) = \big \{ \btheta : \| \aplsT \bX - \btheta \| \leq b(T) \big \}$, where
$T = ||\bX||/\sqrt{p}$,
\begin{equation*}
a^{+}(x)=\text{max}\left\{ 0, 1-\left(1-\frac{2}{p}\right)\frac{1}{x^2} \right\},
\end{equation*}
and $b: [0, \infty) \rightarrow (0, \infty)$ is required to be a nondecreasing function
that is bounded above by $d$. We call $b$ the radius function.
We use a slightly different notation for a RCS
from that used by Casella and Hwang (1983),
who express a RCS in
terms of $||\bX||$.

We define the scaled expected volume of $J(b)$ to be the ratio
\begin{equation}
\label{sev_initial}
\frac{E_{\btheta} (\text{volume of } J(b))}{\text{volume of } I}
= E_{\btheta} \left( \left(\frac{b(T)}{d} \right)^p \right),
\end{equation}
since the volume of a sphere in $\mathbb{R}^p$ with radius $r$ is $2 \, r^p \, \pi^{p/2}/\big(p \, \Gamma(p/2)\big)$.
As proved in the Appendix, this is a function of $\gamma = ||\btheta||$, for given function $b$.
It can also be shown that the coverage probability of $J(b)$
is a function of $\gamma$, for given function $b$.

For computational feasibility, we specify the following parametric form for the
radius function $b$. Suppose $b(x)=d$ for all $x \geq k$,
where $k$ is a (sufficiently large) specified positive number.
Suppose that $x_1,\dots,x_q$ satisfy $0 = x_1 < x_2 < \dots < x_q = k$. The function $b$ is fully
specified by the vector $b(x_1),\dots,b(x_q)$ as follows. The value of $b(x)$ for any given
$x \in [0, k]$ is found by piecewise cubic Hermite polynomial interpolation
for these given function values. We call $x_1, \dots,x_q$ the knots.

For judiciously-chosen values of the knots, we compute the function $b$, which takes
this parametric form, is nondecreasing and is bounded above by $d$,
such that (a) the scaled expected
volume evaluated at $\btheta = \boldsymbol{0}$ (i.e. at $\gamma= 0$) is minimized
and (b) the coverage
probability of $J(b)$ never falls below $1-\alpha$.

This coverage constraint is implemented in the
computations as follows. For any reasonable choice of the function $b$,
the coverage probability of $J(b)$ converges to $1-\alpha$ as $\gamma \rightarrow \infty$. The
constraints implemented in the computations are that the coverage probability of $J(b)$ is
greater than or equal to $1-\alpha$ for every $\gamma$ in a judiciously-chosen finite set of
values. That a given finite set of values of $\gamma$ is adequate to the task is judged by checking
numerically,
at the completion of computations, that the coverage probability constraint is satisfied
for all $\gamma \ge 0$.

This type of computation has been carried out in other related problems by
Farchione and Kabaila (2008, 2012) and Kabaila and Giri (2009ab, 2013ab). The main
lesson from these related computations is that the coverage probability needs to
be computed with great accuracy. Fortunately, in the context of the present paper,
the coverage probability of $J(b)$ can be computed with great accuracy for
$p$ odd, using the formula derived by Casella and Hwang (1983). Note that there
is a typographical error in this formula as stated on page 691 of Casella and Hwang (1983).
The $(n+1)!$ on the second line of (3.10) should be replaced by $(n+i)!$.
To help ensure accurate computation of the coverage probability, progressive
numerical quadrature, using Simpson's rule and a doubling of equal-length segments
at each stage of the progression is used.
The main stopping criterion is that $|Q_{2s} - Q_s|/Q_{2s} \le 10^{-8}$,
where $Q_s$ denotes the computed quadrature using $s$ segments.
All of the computations presented in the present paper were performed using
programs written in MATLAB using the Statistics and
Optimization toolboxes.

We now compare the coverage probability and scaled expected volume properties
of the new RCS and the RCS of Casella and Hwang (1983, Section 4), which
uses the following radius function
\begin{equation*}
b(x) =
\begin{cases}
\sqrt{( 1-(p-2)/d^2 )( d^2 - p \, \log ( 1-(p-2)/d^2 ))} &\text{for } x \in [0, d/\sqrt{p}] \\
\sqrt{( 1-(p-2)/(p \, x^2) ) ( d^2 - p \, \log ( 1-(p-2)/(p \, x^2) ) )} &\text{for } x > d/\sqrt{p}.
\end{cases}
\end{equation*}
This radius function was determined by empirical Bayes considerations.

\baselineskip=18pt

For $1-\alpha = 0.95$, the new RCS was computed for each $p \in \{3,5,7,9,11,13,15,17,19\}$.
We restrict to odd values of $p$ in order to use the computationally-convenient formula
for the coverage probability derived by Casella and Hwang (1983) for $p$ odd.
We have chosen $k=10$.
The positions of the knots of $b$ have been chosen to depend on $p$ as follows.
The first two knots are at 0 and $d/\sqrt{p}$. The next two knots are at
$(d/\sqrt{p}) + c$ and $(d/\sqrt{p}) + 2c$, where $c = ((k/2) - (d/\sqrt{p}))/3$.
The remaining knots are at $k/2$, $3k/4$ and $k$.
The coverage constraint was implemented in the computations by requiring that the
coverage probability of $J(b)$ is greater than or equal to $1-\alpha$ for all
$\gamma \in \{0,1,2,\dots,64,65\}$. This was shown to be adequate to the task by checking
numerically, at the completion of the computation of the new RCS,
that the coverage probability constraint
is satisfied for all $\gamma \ge 0$.

Figures 1, 2 and 3 compare the radius functions $b$, coverage probabilities
and scaled expected volumes of the new RCS and the RCS of
Casella and Hwang (1983, Section 4) for $1-\alpha = 0.95$ and $p = 3, 5$ and $15$,
respectively. The top panel of each of these figures compares the radius functions $b$.
The difference between the radius functions of these RCS's is substantial for $p=3$
and increases as $p$ increases. The middle panel
compares the coverage probabilities of these RCS's as functions of
$\gamma = || \btheta ||$. The bottom panel compares the scaled expected volumes
of these RCS's as functions of $\gamma$.

Figure 1 shows that, for $p=3$, the coverage probability (CP) of the new RCS is no less than $0.95$ for all
$\gamma$ values, while the CP of the RCS of Casella and Hwang (1983, Section 4)
is slightly below $0.95$ at some $\gamma$ values. This figure also shows that, for $p=3$, the
scaled expected volume (SEV) at $\btheta = \boldsymbol{0}$ of the new RCS is substantially
less than the SEV at $\btheta = \boldsymbol{0}$ of the RCS of Casella and Hwang (1983, Section 4).
Figure 2 tells a similar story for $p=5$. Figure 3, which is for the case
$p=15$, shows that the new RCS performs substantially
better than the RCS of Casella and Hwang (1983, Section 4), in terms of SEV at $\btheta = \boldsymbol{0}$.
However, for this value of $p$, both RCS's maintain a CP that is greater than or equal to $0.95$.
The computations of the CP and SEV of these RCS's were checked using Monte Carlo simulations.

Table 1 presents a comparison of the minimum CP's and the SEV's at $\btheta = \boldsymbol{0}$
of these RCS's for $p \in \{3,5,7,9,11,13,15,17,19\}$.
According to this table,
the new RCS always achieves a CP greater than or equal to $0.95$,
while the RCS of Casella and Hwang (1983, Section 4) does not achieve this for $p=3$ and $p=5$.
Also, for every value of $p$ considered, the new RCS achieves a substantially lower SEV at $\btheta = \boldsymbol{0}$
than the RCS of Casella and Hwang (1983, Section 4). In summary, our new RCS compares favourably with that of
Casella and Hwang (1983, Section 4), in terms of both the minimum CP and the SEV at $\btheta = \boldsymbol{0}$.

\newpage
\begin{figure}[h!]
\centering
\begin{tabular}{c}
\includegraphics[width=1.0\textwidth]{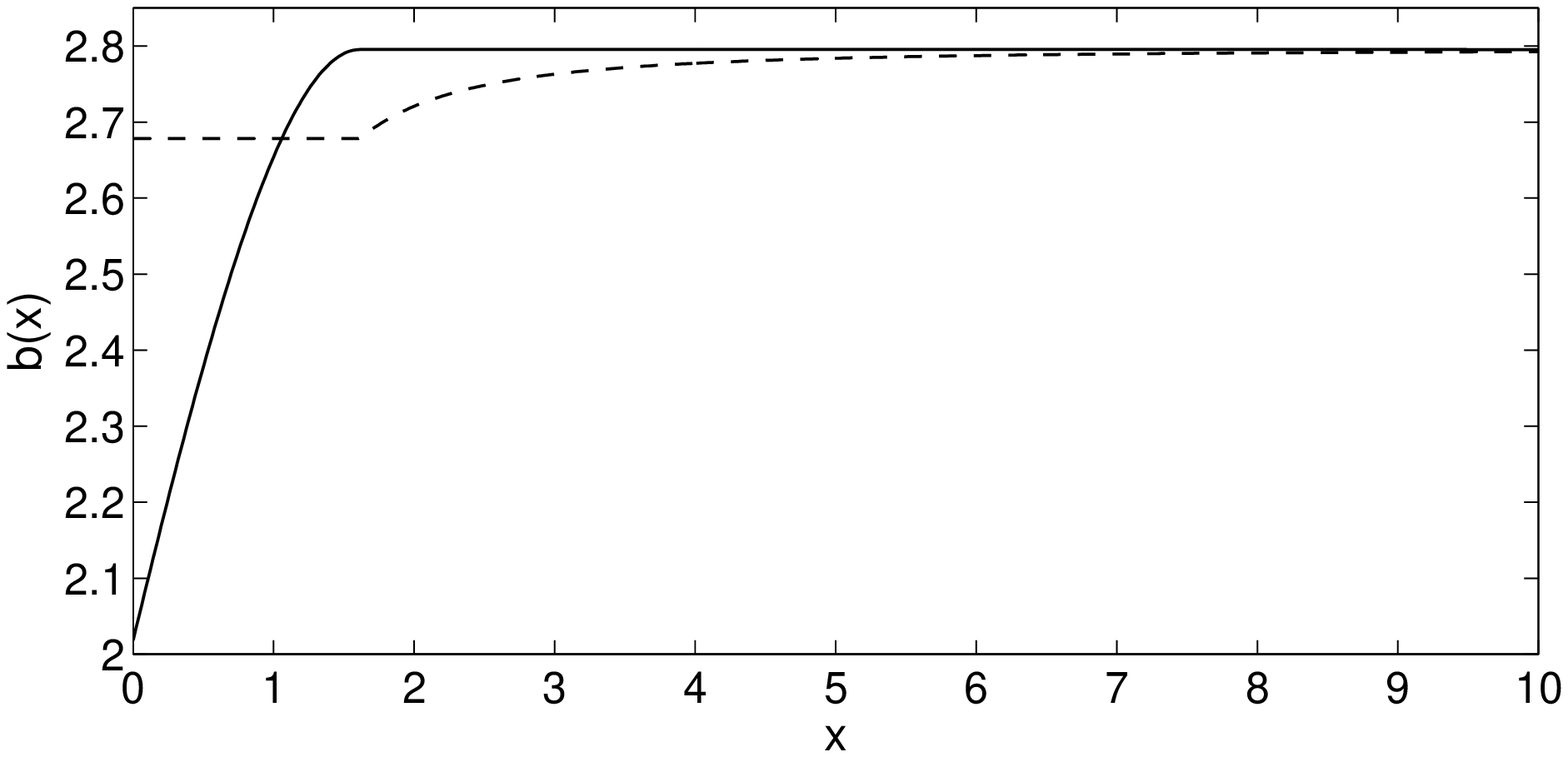} \\
\includegraphics[width=1.0\textwidth]{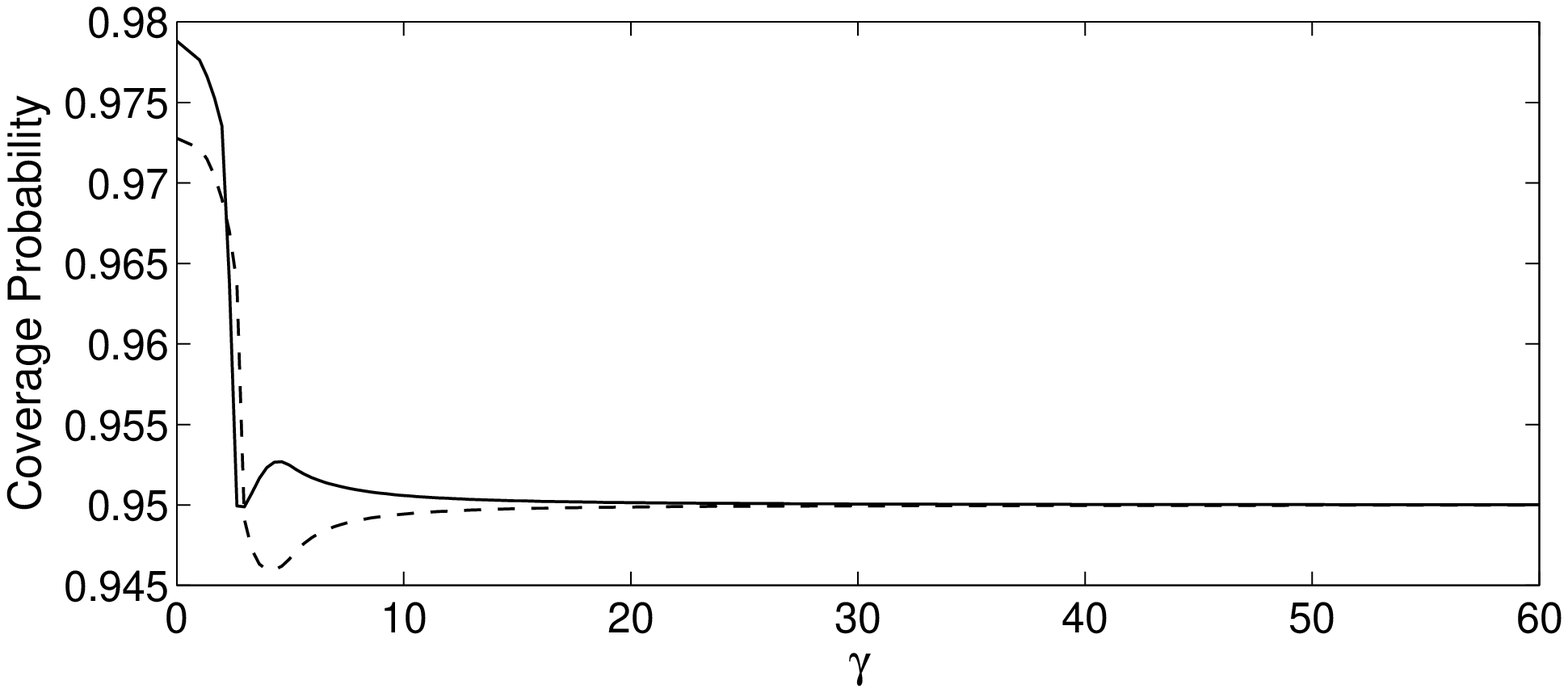}  \\
\includegraphics[width=1.0\textwidth]{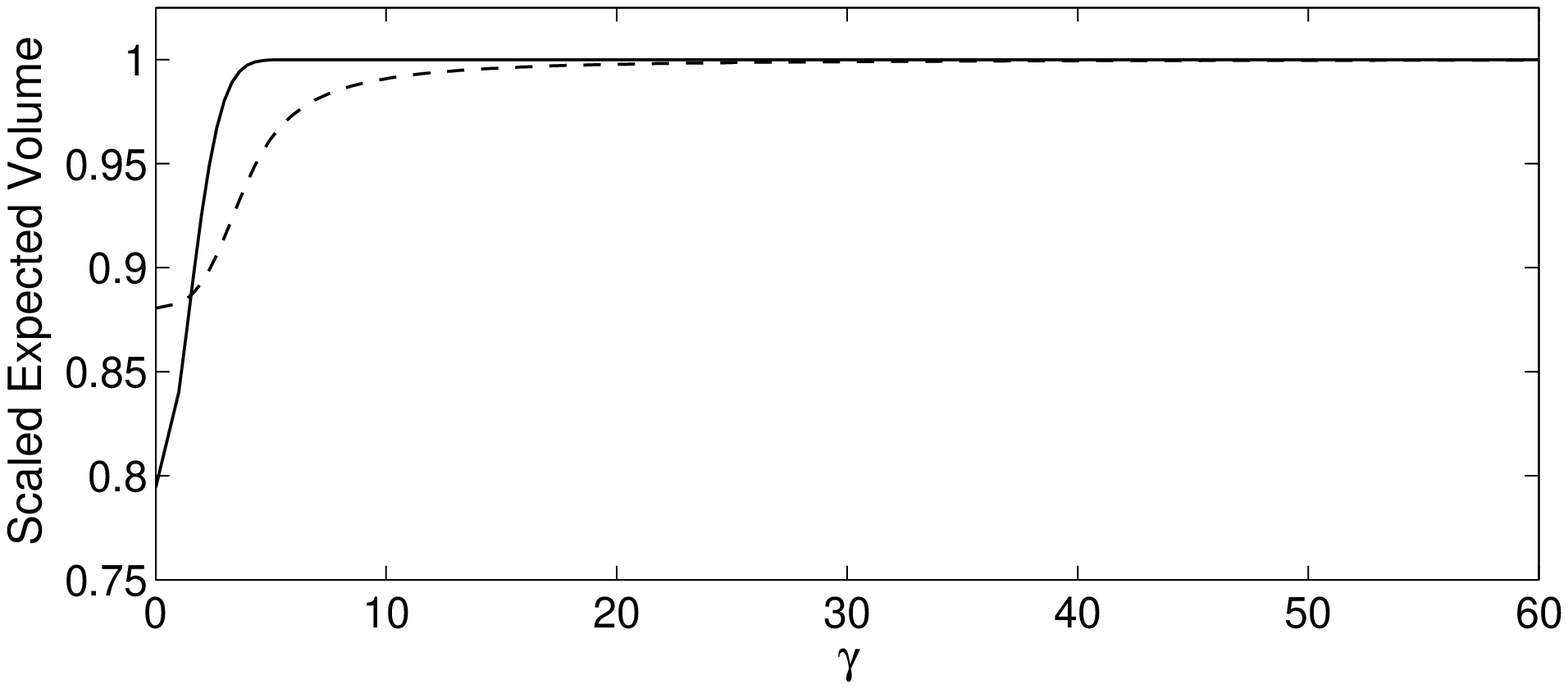} \\
Legend: ------ new RCS  \ \ \ - - -  RCS of Casella and Hwang (1983, Section 4)
\end{tabular}
\caption{Plots of the radius function $b$, coverage probability and
scaled expected volume (as functions of $\gamma = || \btheta ||$) for both the new RCS and
the RCS of Casella and Hwang (1983, Section 4),
for $1-\alpha=0.95$ and $p=3$.}
\label{figure 1}
\end{figure}
\clearpage
\newpage
\begin{figure}[h!]
\centering
\begin{tabular}{c}
\includegraphics[width=1.0\textwidth]{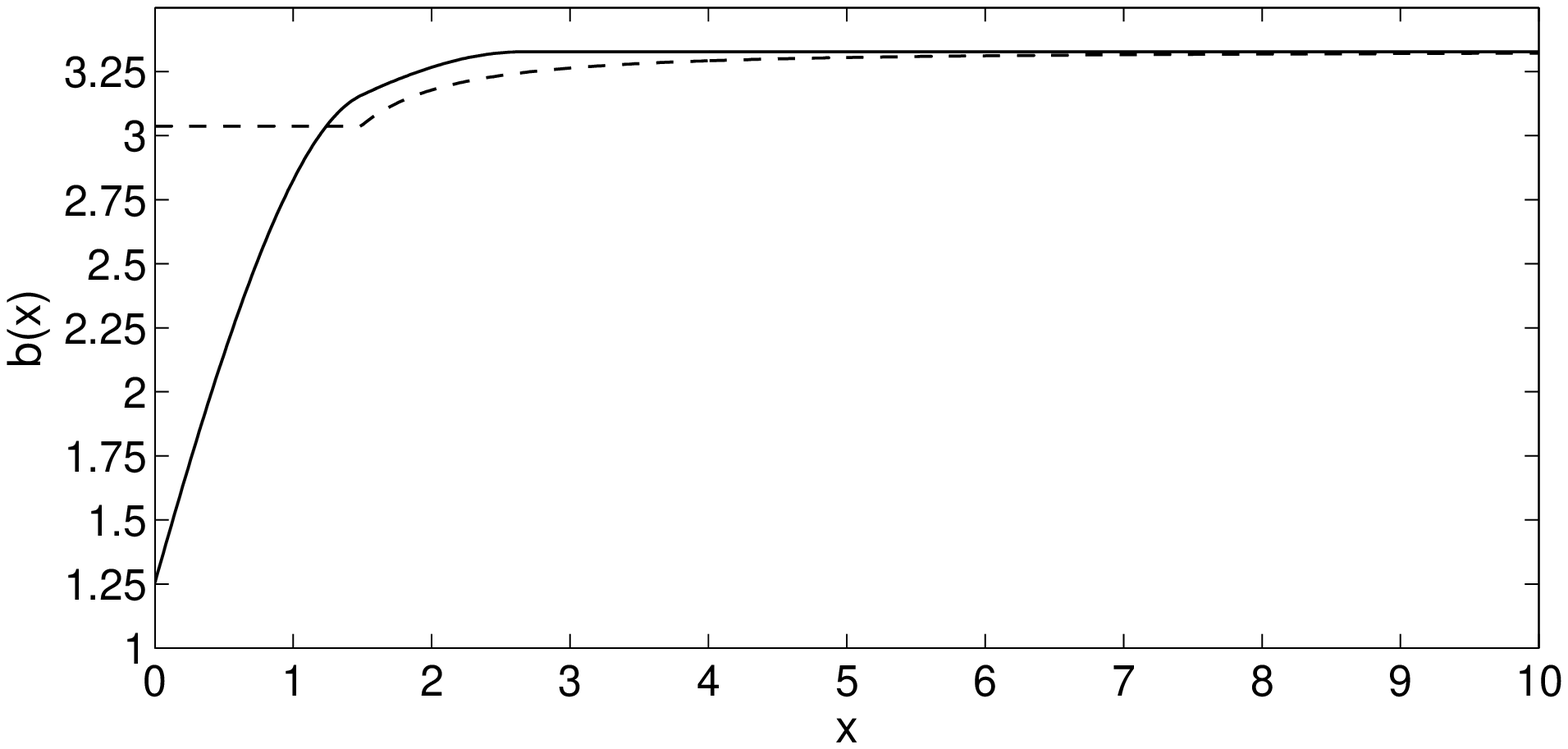} \\
\includegraphics[width=1.0\textwidth]{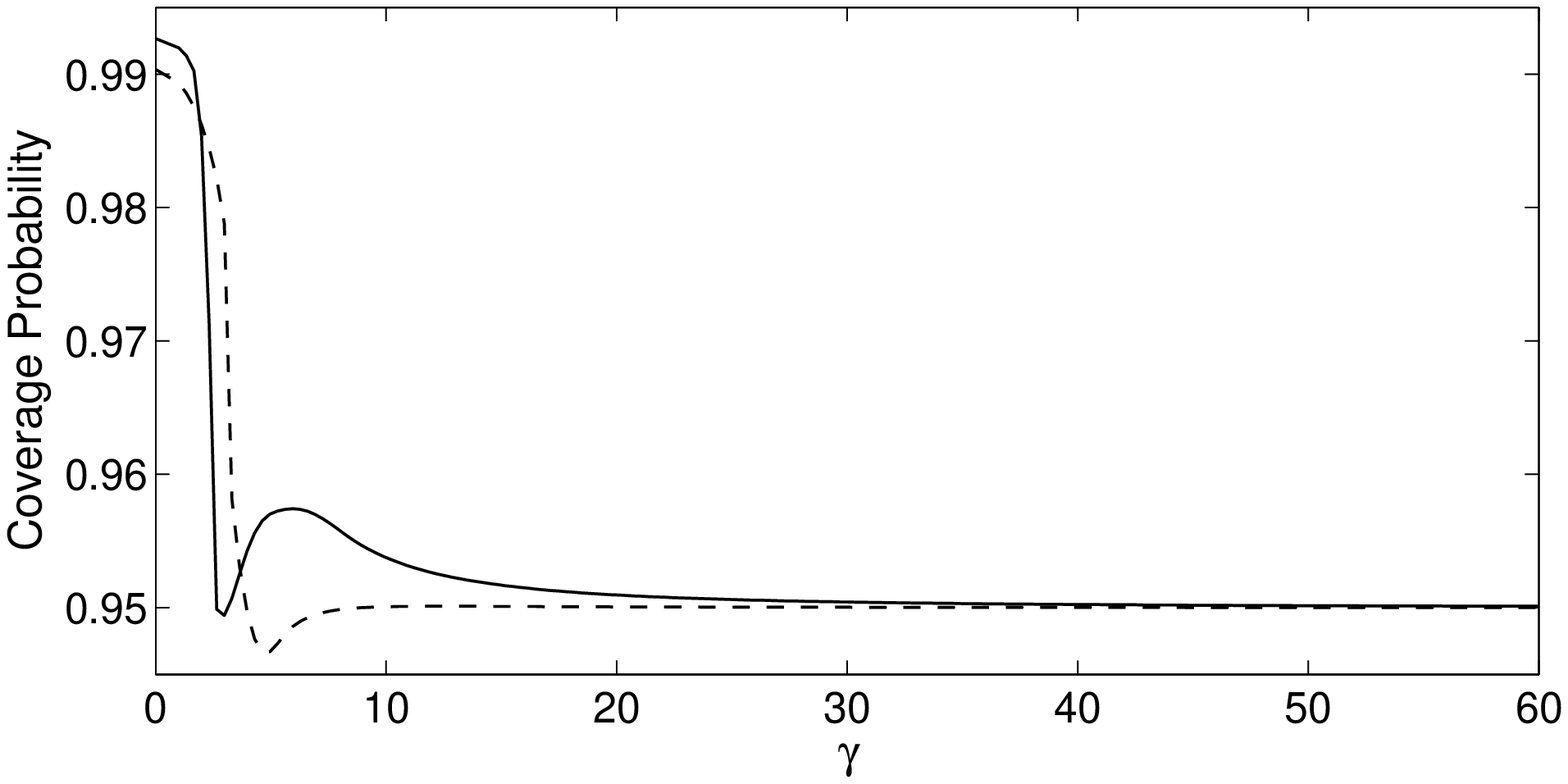}  \\
\includegraphics[width=1.0\textwidth]{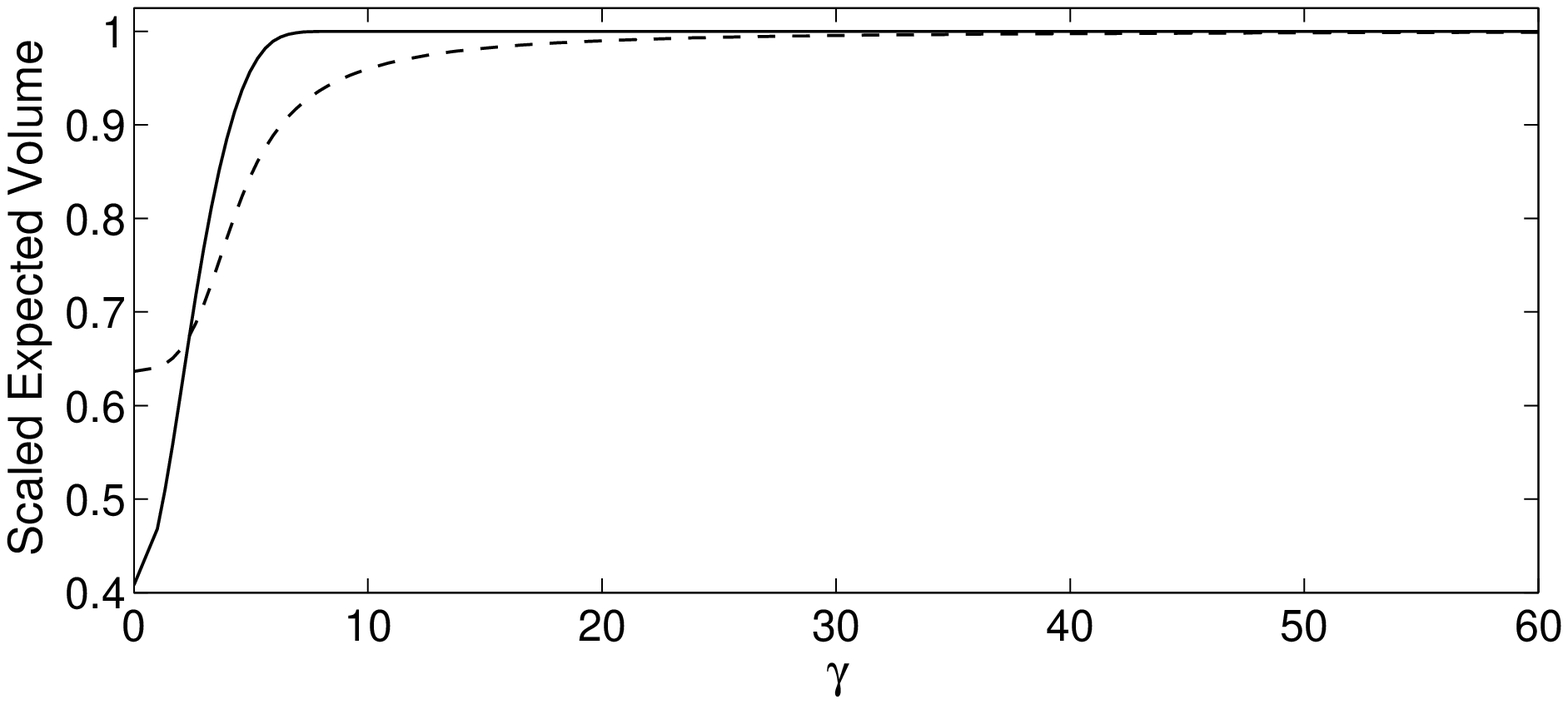} \\
Legend: ------ new RCS  \ \ \  - - -  RCS of Casella and Hwang (1983, Section 4)
\end{tabular}
\caption{Plots of the radius function $b$, coverage probability and
scaled expected volume (as functions of $\gamma = || \btheta ||$) for both the new RCS and
the RCS of Casella and Hwang (1983, Section 4),
for $1-\alpha=0.95$ and $p=5$.}
\label{figure 2}
\end{figure}
\clearpage
\newpage
\begin{figure}[h!]
\centering
\begin{tabular}{c}
\includegraphics[width=1.0\textwidth]{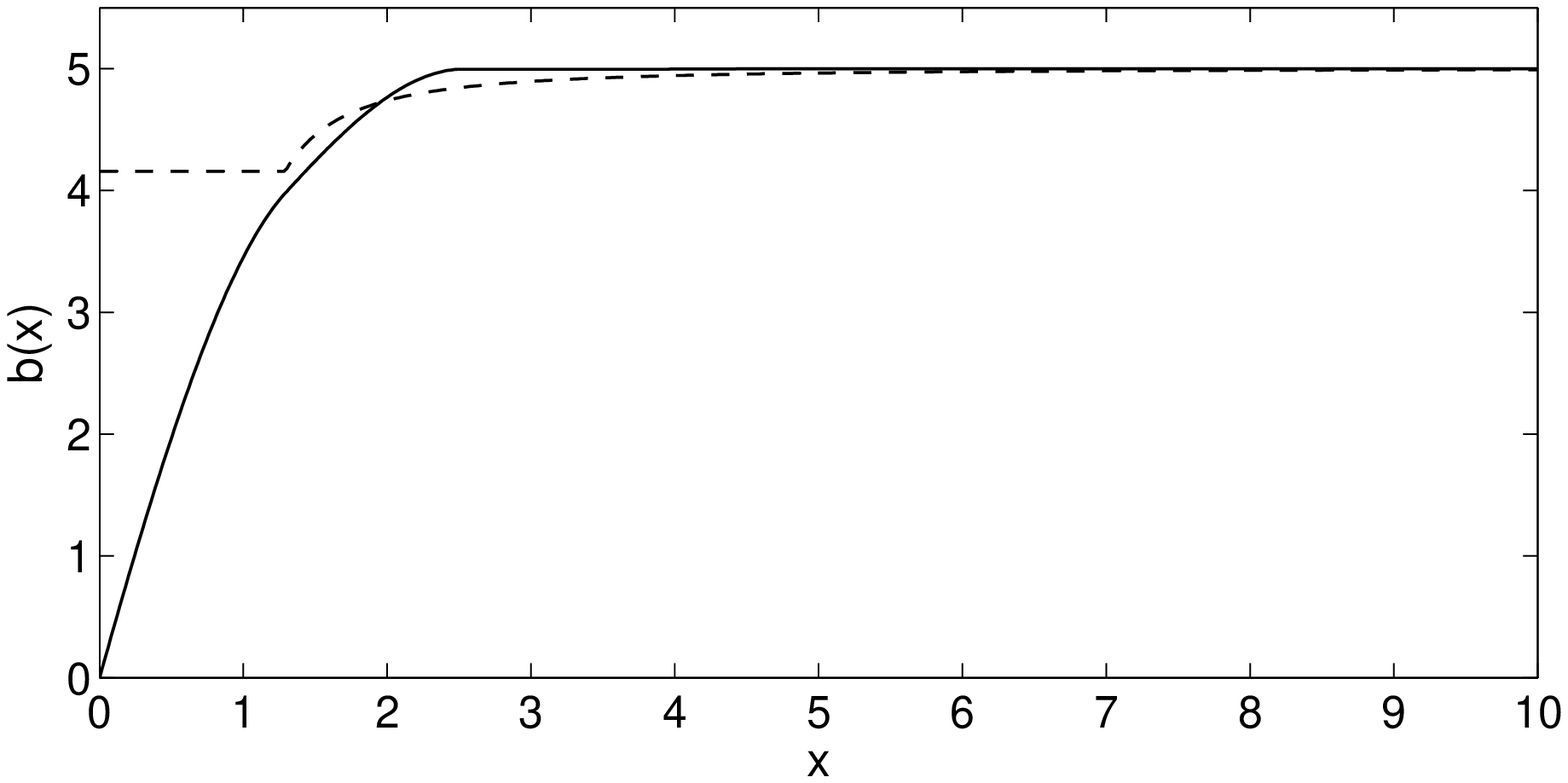} \\
\includegraphics[width=1.0\textwidth]{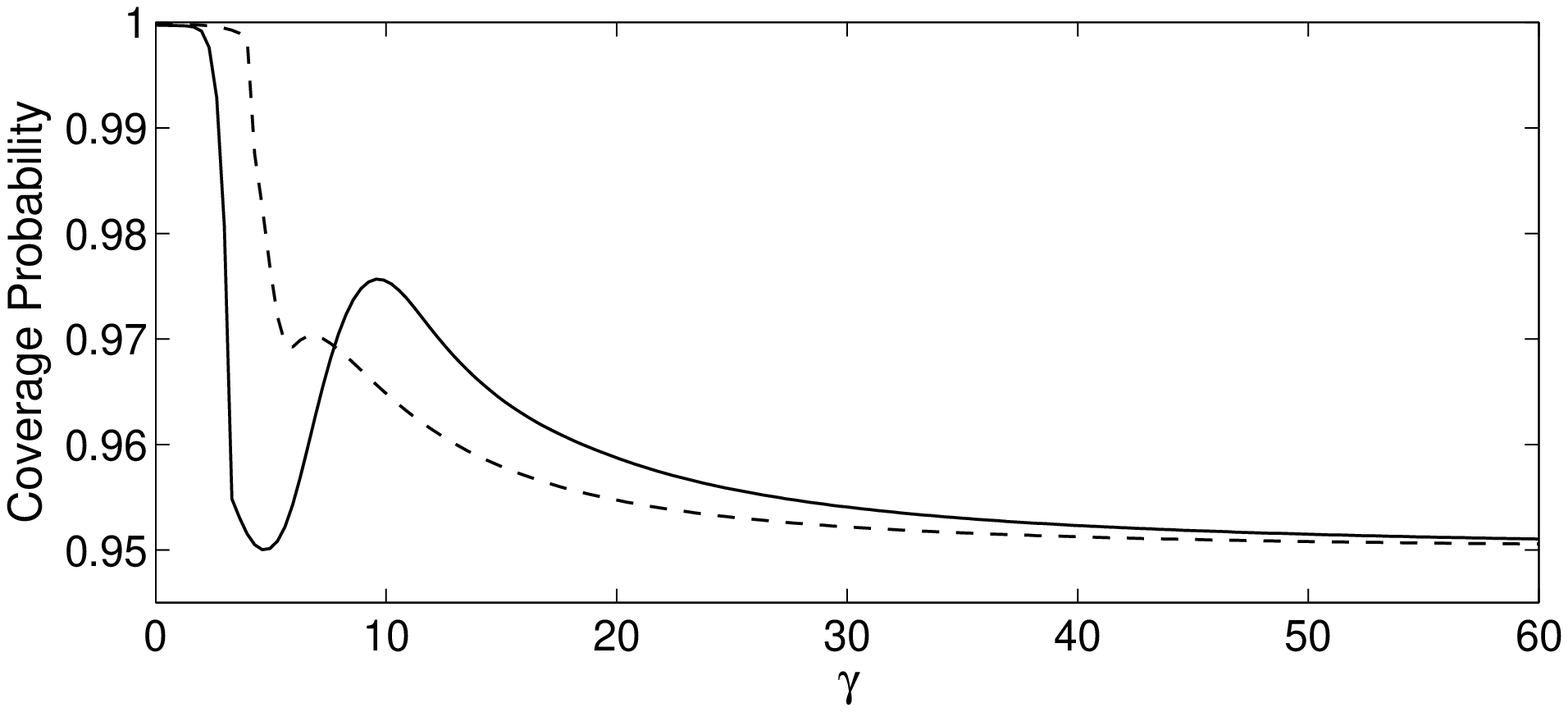}  \\
\includegraphics[width=1.0\textwidth]{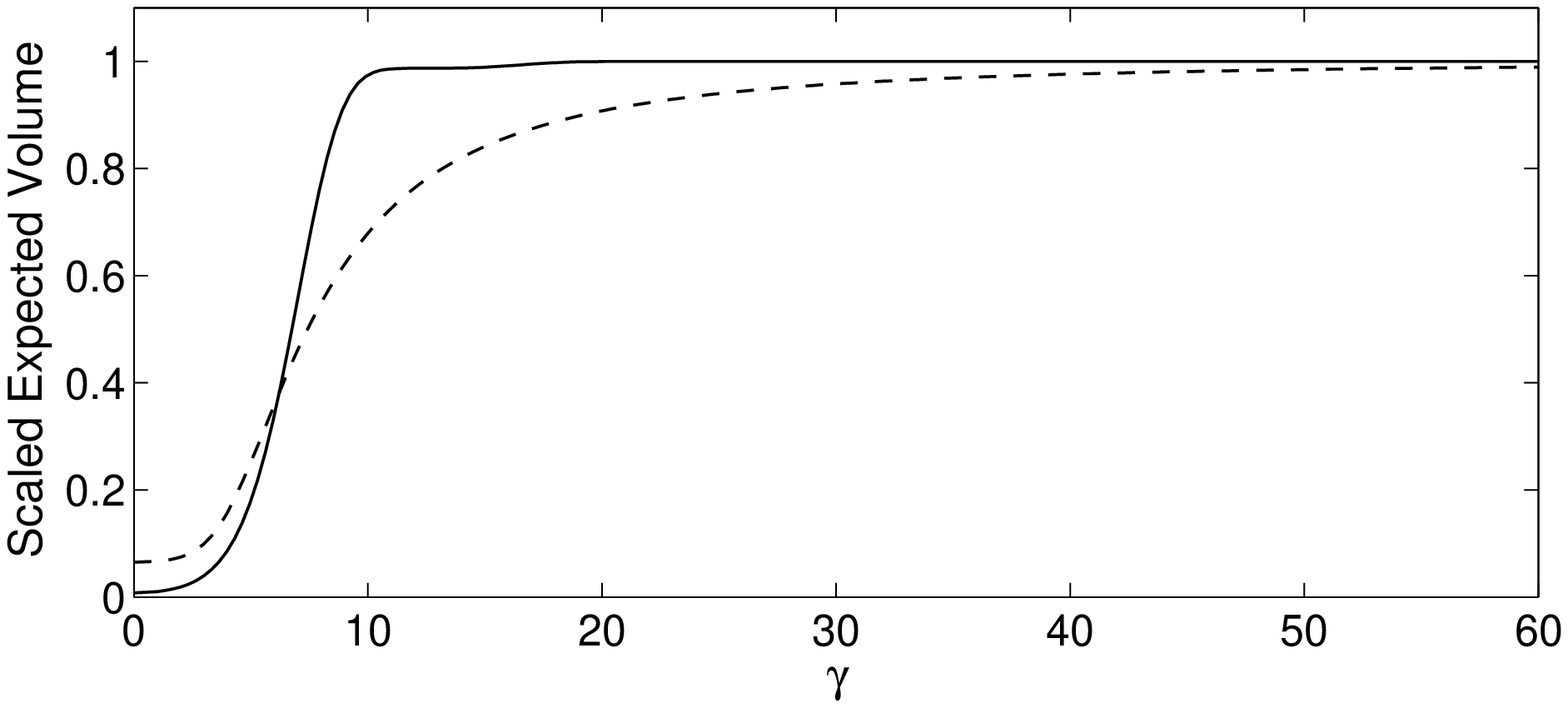} \\
Legend: ------ new RCS \ \ \ - - -  RCS of Casella and Hwang (1983, Section 4)
\end{tabular}
\caption{Plots of the radius function $b$, coverage probability and
scaled expected volume (as functions of $\gamma = || \btheta ||$) for both the new RCS and
the RCS of Casella and Hwang (1983, Section 4),
for $1-\alpha=0.95$ and $p=15$.}
\label{figure 3}
\end{figure}
\clearpage

\medskip

\begin{table}[!htbp]
\centering
\begin{tabular}{*5c}
  \hline
$p$ &  \multicolumn{2}{c}{RCS of Casella \& Hwang (1983, Section 4)} & \multicolumn{2}{c}{new RCS}\\
  \hline
  {}   & minimum CP   & SEV at $\btheta = \boldsymbol{0}$   & minimum CP   & SEV at $\btheta = \boldsymbol{0}$ \\
3	&     0.94594     &     0.88054       &       0.95    &   0.79435\\
5	&     0.94662     &     0.63637       &       0.95    &   0.40805\\
7	&     0.95        &     0.43315       &       0.95    &   0.19833\\
9	&     0.95        &     0.28243       &       0.95    &   0.09245\\
11	&     0.95        &     0.17794       &       0.95    &   0.04127\\
13	&     0.95        &     0.10889       &       0.95    &   0.01831\\
15	&     0.95        &     0.06498       &       0.95    &   0.00804\\
17	&     0.95        &     0.03791       &       0.95    &   0.00357\\
19	&     0.95        &     0.02169       &       0.95    &   0.00272\\	
\hline
\end{tabular}
\caption{Comparison of the new RCS and the RCS of Casella and Hwang (1983, Section 4),
with respect to the minimum coverage probability and the scaled expected volume
at $\btheta = \boldsymbol{0}$,
for $1-\alpha=0.95$ and $p \in \{3,5,7,9,11,13,15,17,19 \}$.}
\end{table}

\smallskip

\noindent {\sl Remark 2.1:}  We have chosen the radius function $b$ to be
a piecewise cubic Hermite interpolating polynomial
in the interval $[0,k]$. Other choices of parametric forms for this function are also possible.
For example, one could choose this function to be a cubic spline in this
interval.

\smallskip

\noindent {\sl Remark 2.2:} Casella and Hwang (1983, Section 3) argue that it is desirable
that the set $S_{\theta}$, described in their Theorem 3.1, is an interval. During the
computation of the new RCS, it was found that at every stage (including the final stage)
this set was an interval.





\bigskip

\noindent {\large{\bf Appendix: Computationally-convenient formula for the scaled expected volume}}

\medskip

The following theorem provide a computationally convenient-formula for the scaled expected volume
of the recentered confidence sphere $J(b)$.

\begin{theorem}

For given function $b$, the scaled expected volume of $J(b)$ is a function of $\gamma = \ntheta$.

 \begin{enumerate}

 \item

 Let $f \big(y;p,\gamma^2 \big)$ denote the noncentral $\chi^2$ pdf with $p$ degrees of freedom and noncentrality
 parameter $\gamma^2$, evaluated at $y$.
 The scaled expected volume of $J(b)$ is
 \begin{equation}
 \label{first_sev}
 \int_0^{\infty} \left( \frac{b \big(\sqrt{y/p} \big)}{d} \right)^p f \big(y;p,\gamma^2 \big) \, dy.
 \end{equation}

 \item

Suppose that $b(x)=d$ for all $x \ge k$, where $k$ is a specified positive number.
The scaled expected volume of $J(b)$ is
 \begin{equation}
 \label{second_sev}
 \int_0^{pk^2} \left( \frac{b \big(\sqrt{y/p} \big)}{d} \right)^p f \big(y;p,\gamma^2 \big) \, dy + 1 - F \big(pk^2; p, \gamma^2 \big),
 \end{equation}
 where $F \big(y; p, \gamma^2 \big)$ denotes the noncentral $\chi^2$ cdf
 with $p$ degrees of freedom and noncentrality
 parameter $\gamma^2$, evaluated at $y$.

 \end{enumerate}

 \end{theorem}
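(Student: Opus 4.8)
The plan is to compute the expected volume directly from the definition of $J(b)$ and the change-of-variables to the noncentral $\chi^2$ distribution. First I would recall from \eqref{sev_initial} that the scaled expected volume equals $E_{\btheta}\big( (b(T)/d)^p \big)$, where $T = \|\bX\|/\sqrt{p}$. The key distributional fact is that $\|\bX\|^2 \sim \chi^2_p(\gamma^2)$, the noncentral chi-squared distribution with $p$ degrees of freedom and noncentrality parameter $\gamma^2 = \|\btheta\|^2$; this is a standard consequence of $\bX \sim N(\btheta, \boldsymbol{\textbf{I}_p})$. In particular the distribution of $\|\bX\|^2$, and hence of $T$, depends on $\btheta$ only through $\gamma$, which immediately gives the headline claim that the scaled expected volume is a function of $\gamma$ alone.

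For part (a), I would set $Y = \|\bX\|^2 = p\,T^2$, so that $T = \sqrt{Y/p}$ and $Y$ has pdf $f(y; p, \gamma^2)$. Substituting into $E_{\btheta}\big( (b(T)/d)^p \big)$ gives
\begin{equation*}
E_{\btheta}\!\left( \left( \frac{b(T)}{d} \right)^p \right)
= \int_0^{\infty} \left( \frac{b\big(\sqrt{y/p}\big)}{d} \right)^p f\big(y; p, \gamma^2\big)\, dy,
\end{equation*}
which is \eqref{first_sev}. This step is essentially just the law of the unconscious statistician together with the change of variable; one should note that $b$ is bounded (by $d$) and measurable, so the integral is finite and the manipulation is justified.

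For part (b), I would split the integral in \eqref{first_sev} at $y = pk^2$, i.e.\ at $T = k$. On the range $y \ge pk^2$ we have $\sqrt{y/p} \ge k$, so by hypothesis $b\big(\sqrt{y/p}\big) = d$ and the integrand $(b(\sqrt{y/p})/d)^p$ equals $1$ there. Hence
\begin{equation*}
\int_{pk^2}^{\infty} \left( \frac{b\big(\sqrt{y/p}\big)}{d} \right)^p f\big(y; p, \gamma^2\big)\, dy
= \int_{pk^2}^{\infty} f\big(y; p, \gamma^2\big)\, dy
= 1 - F\big(pk^2; p, \gamma^2\big),
\end{equation*}
and adding back the integral over $[0, pk^2]$ yields \eqref{second_sev}.

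There is no serious obstacle here: the argument is a routine change of variables plus a trivial splitting of the domain of integration. The only point that requires a line of justification is the identification of the law of $\|\bX\|^2$ as noncentral $\chi^2_p(\gamma^2)$ depending on $\btheta$ only through $\gamma$ — and this is precisely the fact that underlies the whole reduction, so I would state it carefully at the outset and then let parts (a) and (b) follow as short corollaries.
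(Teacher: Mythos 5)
Your proposal is correct and follows essentially the same route as the paper's proof: identify $\|\bX\|^2$ as noncentral $\chi^2_p(\gamma^2)$, apply the change of variables to obtain \eqref{first_sev} from \eqref{sev_initial}, and split the integral at $y = pk^2$ (where the integrand becomes $1$) to get \eqref{second_sev}. You simply spell out the steps that the paper compresses into ``it follows'' and ``obviously.''
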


 \begin{proof}

 Note that $Y = ||\bX||^2 = X_1^2 + \dots + X_p^2$ has a noncentral $\chi^2$
 distribution with $p$ degrees of freedom and noncentrality
 parameter $\gamma^2$. It follows from \eqref{sev_initial}
 that the scaled expected volume of $J(b)$ is  \eqref{first_sev}.
 Clearly, \eqref{first_sev} is a function of $\gamma$,
 for given function $b$.

 Suppose that $b(x)=d$ for all $x \ge k$, where $k$ is a specified positive number.
 Obviously,  \eqref{second_sev} follows immediately from \eqref{first_sev}.

 \end{proof}

 Suppose that the radius function $b$ is a piecewise cubic Hermite interpolating polynomial
 in the interval $[0,k]$,
 with knots at $x_1, \dots, x_q$
($0 = x_1 < x_2 < \dots < x_q = k$),
that takes the value $d$ for all $x \ge k$.
This function is very smooth between between successive knots (it is a cubic
between these knots). However, it may not possess a second derivative at each of the knots. For this
reason, we use this theorem to compute the
scaled expected volume of $J(b)$ using the formula
 \begin{equation*}
 \sum_{i=1}^{q-1} \int_{p x_i^2}^{p x_{i+1}^2}
 s(y;b,\gamma) \, dy + 1 - F \big(pk^2; p, \gamma^2 \big),
 \end{equation*}
where each integral is computed separately by numerical quadrature.

\bigskip

\noindent \textbf{References}

\smallskip

\rf BERGER, J. (1980). A robust generalized Bayes estimator and confidence region
for a multivariate normal mean. {\sl Annals of Statistics}, 8, 716--761.



\smallskip

\rf BROWN, L.D. (1966). On the admissibility of invariant estimators of one or more location
parameters. {\sl Annals of Mathematical Statistics}, 37, 1087--1136.

\smallskip

\rf CASELLA, G. and HWANG, J.T. (1983). Empirical Bayes confidence sets for the mean of a multivariate
normal distribution. {\sl Journal of the American Statistical Association}, 78, 688--698.

\smallskip

\rf CASELLA, G. and HWANG, J.T. (1986). Confidence sets and the Stein effect. {\sl Communications in Statistics --
Theory and Methods}, 15, 2043--2063.

\smallskip

\rf CASELLA, G. and HWANG, J.T. (1987). Employing vague prior information in the construction of
confidence sets. {\sl Journal of Multivariate Analysis}, 21, 79--104.

\smallskip

\rf CASELLA, G. and HWANG, J.T. (2012). Shrinkage
confidence procedures. {\sl Statistical Science}, 27, 51--60.

\rf EFRON, B. (2006). Minimum volume confidence regions for a multivariate normal mean vector.
{\sl Journal of the Royal Statistical Society, Series B}, 68, 655--670.

\smallskip

\rf FARCHIONE, D. and KABAILA, P. (2008). Confidence intervals for the normal mean
utilizing prior information. {\sl Statistics \& Probability Letters}, 78, 1094--1100.

\smallskip

\rf FARCHIONE, D. and KABAILA, P. (2012). Confidence intervals in regression centred
on the SCAD estimator. {\sl Statistics \& Probability Letters}, 82, 1953--1960.

\smallskip

\rf FAITH, R.E. (1976).
Minimax Bayes point and set estimators of a multivariate normal mean.
Unpublished PhD thesis, Department of Statistics, University of Michigan.

\smallskip

\rf HWANG, J.T.  and  CASELLA, G.(1982). Minimax
confidence sets for the mean of a multivariate normal distribution.
{\sl Annals of Statistics}, 10, 868--881.

\smallskip

\rf JOSHI, V.M. (1967) Inadmissibility of the usual confidence sets
for the mean of multivariate normal population. {\sl Annals of Mathematical
Statistics}, 38, 1868--1875.

\smallskip

\rf KABAILA, P. and GIRI, K. (2009a). Confidence intervals in regression utilizing
uncertain prior information. {\sl Journal of Statistical Planning and Inference},
139, 3419--3429.

\smallskip

\rf KABAILA, P. and GIRI, K. (2009b). Large-sample confidence intervals in a
two-period crossover, utilizing
uncertain prior information. {\sl Statistics \& Probability Letters},
79, 652--658.

\smallskip

\rf KABAILA, P. and GIRI, K. (2013a). Simultaneous confidence intervals for the population
cell means, for two-by-two factorial data, that utilize
uncertain prior information. To appear in {\sl Communications in Statistics: Theory and Methods}.

\smallskip

\rf KABAILA, P. and GIRI, K. (2013b). Further properties of frequentist confidence intervals
in regression that utilize
uncertain prior information. To appear in {\sl Australian \& New Zealand Journal of Statistics}.

\smallskip

\rf SAMWORTH, R. (2005). Small confidence sets for the mean of a spherically
symmetric distribution. {\sl Journal of the Royal Statistical Society, Series B},
67, 343--361.

\smallskip

\rf SHINOZAKI, N. (1989). Improved confidence sets for the mean of a multivariate
normal distribution. {\sl Annals of the Institute of Mathematical Statistics}, 41,
331--346.

\smallskip

\rf STEIN, C. (1962). Confidence sets for the mean of a multivariate normal
distribution.
{\sl Journal of the Royal Statistical Society, Series B}, 9, 1135--1151.

\smallskip

\rf TSENG, Y.L. and BROWN, L.D. (1997). Good exact confidence sets for a multivariate normal mean.
{\sl Annals of Statistics}, 5, 2228--2258.

\end{document}